\newtheorem{theorem}{Theorem}[section]
\newtheorem{lemma}[theorem]{Lemma}
\newtheorem{conj}{Conjecture}[section]
\newtheorem{corollary}[theorem]{Corollary}
\theoremstyle{definition}
\newtheorem{definition}[theorem]{Definition}
\newtheorem{ex}[theorem]{Example}
\newtheorem{remark}[theorem]{Remark}
\def\R{{\mathbb R}}    
\def\N{{\mathbb N}}      
\def\C{{\mathbb C}}      
\def\P{{\mathbb P}}
 \def\oli{\overline}
\def\Hff{\mathcal{H}}
\def\O{\Omega}
\title{Non compact boundaries of complex analytic varieties in Hilbert spaces}
\author{Samuele \textsc{Mongodi}\thanks{Email: \texttt{s.mongodi@sns.it}}}
\affil{Scuola Normale Superiore\authorcr
 Piazza dei Cavalieri 7, I-56123 Pisa, Italy}
\author{Alberto \textsc{Saracco}
\thanks{Email: \texttt{alberto.saracco@unipr.it}}}
\affil{Dipartimento di Matematica e Informatica, Universit\`a di Parma\authorcr
 Parco Area delle Scienze 53/A, I-43124 Parma, Italy}
\begin{document}


\maketitle

\abstract{We treat the boundary problem for complex varieties with isolated singularities, of complex dimension greater than or equal to $3$, non necessarily compact, which are contained in strongly convex, open subsets of a complex Hilbert space $H$. We deal with the problem by cutting with a family of complex hyperplanes and applying the already known result for the compact case.\\
\\
\noindent{\bf Classification: } Primary 32V15; Secondary 58B12\\
\noindent{\bf Keywords: } Boundary problem; convexity; maximally complex submanifold; CR-geometry; complex Hilbert spaces}

\section {Introduction}
Let $M$ be a smooth and oriented $(2m+1)$-dimensional real submanifold of some
complex manifold $X$. A natural question
arises, whether $M$ is the boundary of an ($m+1$)-dimensional complex analytic
subvariety of $X$. This problem, the so-called \textit{boundary
problem}, has been widely treated over the past fifty-five years when
$M$ is compact and $X$ is $\C^n$ or $\C\P^n$. For a review of the boundary problem see~\cite{S}, chapter 6.

The case when $M$ is a compact, connected curve in $X=\C^n$
($m=0$), has been first solved by Wermer~\cite{W} in 1958. In 1966, Stolzenberg \cite{St} proved the same result when $M$ is a union of smooth curves. Later
on, in 1975, Harvey and Lawson in~\cite{HL} and~\cite{HL3} solved
the boundary problem in $\C^n$ and then in
$\C\P^n\setminus\C\P^r$, in terms of holomorphic chains, for any $m$. The
boundary problem in $\C\P^n$ was studied by Dolbeault and Henkin,
in~\cite{DH} for $m=0$ and in~\cite{DH2} for any $m$. Moreover, in
these two papers the boundary problem is dealt with also for
closed submanifolds (with negligible singularities) contained in
$q$-concave (i.e.\ union of $\C\P^q$'s) open subsets of $\C\P^n$.
This allows $M$ to be non compact. The results in~\cite{DH}
and~\cite{DH2} were extended by Dinh in~\cite{D99}.\vspace{0,3cm}

The main theorem proved by Harvey and Lawson in~\cite{HL} is that
if $M\subset\C^n$ is compact and maximally complex then $M$ is the
boundary of a unique holomorphic chain of finite
mass~\cite[Theorem 8.1]{HL}. Moreover, if $M$ is contained in the
boundary $b\Omega$ of a strictly pseudoconvex domain $\Omega$, then
$M$ is the boundary of a complex analytic subvariety of $\Omega$,
with isolated singularities~\cite{HL2} (see also~\cite{G}).

In~\cite{DS1} Della Sala and the second named author generalized this last theorem to a non
compact, connected, closed and maximally complex submanifold $M$ (of real dimension at least $3$, i.e.\ $m\geq1$) of
the connected boundary $b\Omega$ of an unbounded weakly
pseudoconvex domain $\Omega\subset \C^n$. The extension result is obtained via a method of \lq\lq cut-extend-and-paste\rq\rq.

In~\cite{M} the first named author established the Harvey-Lawson theorem for maximally complex manifolds of real dimension at least $3$ ($m\geq1$) contained in a complex Hilbert space, under the addition of a technical hypothesis.\vspace{0,3cm}

The aim of this paper is to combine the techniques of these last papers, in order to generalize the extension result to a non
necessarily bounded, connected, closed and maximally complex submanifold $M$ ($\dim_\R M\geq 5$, i.e.\ $m\geq2$) of
the connected boundary $b\Omega$ of a strongly convex unbounded domain $\Omega$ of a complex Hilbert space $H$. The precise definitions will be given in the following section. The main theorem we establish is the following:

\begin{theorem}\label{main} Let $H$ be a complex Hilbert space, and $M\subset H$ such that
\begin{itemize}
\item[(i)]  $M$ is a smooth maximally complex manifold of real dimension $2m+1\geq5$ (complex dimension $m\geq2$);
\item[(ii)] $M\subset b\Omega\subset H$, where $\Omega$ is a strongly convex domain;
\item[(iii)] there exists an orthogonal decomposition
$H = \mathbb C^{m+1}\times H'$ such that the orthogonal projection $p : H \to \mathbb C^{m+1}$, when restricted to $M$, is a closed immersion with transverse self-intersections;
\item[(iv)] $M$ is quasi-locally compact.
\end{itemize}
Then there exists a unique analytic chain of finite dimension
$T$ in $\Omega$ with isolated singularities, such that the boundary of $T$ is $M$.\end{theorem}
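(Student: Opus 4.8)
The plan is to reduce the statement to the already known compact case by cutting $M$ with a one-parameter family of complex hyperplanes, in the spirit of the \lq\lq cut-extend-and-paste\rq\rq\ method of~\cite{DS1}. Using the orthogonal decomposition $H=\C^{m+1}\times H'$ of hypothesis~(iii), fix a generic complex-linear coordinate $z_1$ on $\C^{m+1}$ and consider the pencil $L_t:=p^{-1}\{z_1=t\}$, $t\in\C$; each $L_t$ is a closed complex affine subspace of $H$, hence itself a complex Hilbert space, and $\Omega_t:=\Omega\cap L_t$ is again a strongly convex domain. The first step is to prove that, for $t$ outside a closed negligible set $E\subset\C$, the slice $M_t:=M\cap L_t$ is a smooth maximally complex submanifold of $b\Omega_t$ of real dimension $2m-1\ge3$ (this lower bound being exactly why the hypothesis $m\ge2$ is needed), with transverse self-intersections, and that $M_t$ is \emph{compact}: maximal complexity of a generic complex hyperplane slice of a maximally complex manifold is classical, the self-intersection structure is preserved by a transversality argument, and the compactness of the slices is precisely the content of quasi-local compactness~(iv) together with the fact that $p|_M$ is a closed immersion.

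The second step applies the compact case. For each $t\notin E$ the pair $M_t\subset b\Omega_t$ satisfies the hypotheses of the Harvey--Lawson theorem in the Hilbert space $L_t$ as established in~\cite{M} (the technical hypothesis there being inherited from~(iv)); since $\Omega_t$ is strongly convex, the polynomial hull of $M_t$ is contained in the convex hull of $M_t$, hence in $\overline{\Omega_t}$, so the resulting analytic chain $T_t$ of dimension $m$ lies in $\overline{\Omega_t}$, meets $b\Omega_t$ only along $M_t$, and has finitely many isolated singularities; moreover $T_t$ is \emph{the unique} analytic chain of finite dimension in $\Omega_t$ with $bT_t=M_t$. Since the family $\{M_t\}_{t\notin E}$ arises by slicing a fixed smooth manifold, after enlarging $E$ by a further discrete set we may assume that for $t\notin E$ the slice $M_t$ is in general position and $T_t$ is a smooth subvariety of $\Omega_t$.

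It remains to reassemble the $T_t$ into a single chain $T$ in $\Omega$ with $bT=M$. Set $T:=\overline{\bigcup_{t\notin E}T_t}$, the closure taken in $\Omega$. Slicing any candidate solution $S$ --- a finite-dimensional analytic chain in $\Omega$ with $bS=M$ --- by a generic $L_s$ yields, via the slicing theory of currents, a chain with boundary $M_s$, which by the uniqueness of the previous step must equal $T_s$; this forces $S=T$ and gives the asserted uniqueness. The same uniqueness, fed into a Montel/normal-families argument adapted to the Hilbert-space setting, yields holomorphic dependence of $T_t$ on $t\in\C\setminus E$, so that $\bigcup_{t\notin E}T_t$ is a closed analytic subset of pure dimension $m+1$ of $\Omega\setminus\bigcup_{t\in E}L_t$; a Remmert--Stein/Hartogs type removable-singularity argument across the negligible set $\bigcup_{t\in E}L_t$ then shows that $T$ is analytic in all of $\Omega$. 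Since $\bigcup_{t\in\C}L_t=H\supset M$, one obtains $bT=M$. Finally, from the finiteness of the singular locus of each $T_t$ and the fact that for all but finitely many $t$ the slice $T_t$ is smooth, one concludes that $\operatorname{Sing}(T)$ is discrete --- the self-intersection locus of $M$ lies on $b\Omega$ and so contributes nothing to $\operatorname{Sing}(T)$. The main obstacle is precisely this last step: establishing the holomorphic dependence of the slice solutions, keeping the exceptional set $E$ small enough for the removable-singularity argument to apply, and --- the subtlest point --- ruling out that $\operatorname{Sing}(T)$ sweeps out a curve as $t$ varies, which is what dictates the generic choice of the pencil $\{L_t\}$ and a careful analysis of the exceptional set.
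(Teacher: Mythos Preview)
Your overall strategy --- slice by complex hyperplanes, apply the known compact result in each slice, then glue --- is indeed the paper's approach. But two concrete points separate your outline from a working proof.

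\textbf{Compactness of the slices.} You take the pencil $L_t=p^{-1}\{z_1=t\}$ coming from the decomposition in hypothesis~(iii) and assert that $M_t$ is compact ``precisely [by] quasi-local compactness~(iv) together with the fact that $p|_M$ is a closed immersion''. This does not follow: quasi-local compactness only upgrades \emph{bounded} pieces of $M$ to relatively compact ones, and neither the closedness of $p|_M$ nor the strong convexity of $\Omega_t$ gives you that $M_t$ (or even $p(M)\cap\{z_1=t\}$) is bounded. The paper's choice of slicing direction is different and is dictated by geometry, not by~(iii): one takes a complex linear functional $\lambda$ whose real part defines a supporting hyperplane of $\Omega$ at a boundary point, and the discussion after Definition~\ref{def_strcvx2} (strong convexity $\Rightarrow$ $\Omega$ sits inside a cone) shows that $\{\lambda=k\}\cap\overline{\Omega}$ is bounded for every admissible $k$. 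Only then does~(iv) make $A_k=M\cap\{\lambda=k\}$ compact. The projection $p$ reenters later, and separately, to represent the filled slices as graphs; conflating the two roles is the source of your gap. (Example~\ref{parapertissima} shows that without the strong convexity input even boundedness of slices can fail.)

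\textbf{Gluing.} You propose a Montel/normal-families argument for holomorphic dependence and then a Remmert--Stein step, and you flag this as the main obstacle. The paper does something more concrete. First, a Lewy-type local extension (Lemma~\ref{L3}) and its semi-global consequence (Corollary~\ref{L4}) produce an actual complex manifold $W_0$ with boundary $M$ in a tubular neighbourhood of $b\Omega$; this is what later forces the various patches to agree. Second, for a small parameter set $U$ the union $\widetilde{A}_U=\bigcup_{k\in U}\widetilde{A}_k$ is handled by current theory: the coarea formula shows $[\widetilde{A}_U]$ is a positive $(m+1,m+1)$-rectifiable current, closed in $\Omega_U$, and the structure theorem of~\cite{M} identifies it with integration over an analytic set; the graph functions $F^h_{j,k}$ built in~\cite{M} are then shown to depend holomorphically on $k$ off a set of zero $\Hff^{2m+1}$-measure and to extend across it by boundedness. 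The semi-global manifold $W_0$ is what glues the different $\widetilde{A}_U$'s together. Your outline omits the $W_0$ step entirely, and it is precisely what replaces the vague normal-families argument. Likewise, discreteness of $\mathrm{Sing}(T)$ is not obtained from ``all but finitely many $T_t$ are smooth'' (which you have not established) but by importing the finite-dimensional argument of~\cite{DS1} after noting that a finite-dimensional analytic variety in $H$ sits inside a finite-dimensional complex manifold.
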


As already mentioned, the case when $\Omega$ is bounded was treated in \cite{M}; therefore we will always suppose $\Omega$ to be unbounded.

The strategy behind the proof of Theorem~\ref{main} is similar to that used in~\cite{DS1} and it is actually a simplification of that one. First we get a local and semi-global extension (see section 3), through an Lewy-type extension theorem for Hilbert-valued $CR$-functions. Then we cut $\Omega$ with parallel complex-hyperplanes.

Hypotheses (ii) and (iv) are technicalities needed in order to assure that the slices of $M$ are compact, so that we can apply the extension result in~\cite{M} (the slices of $M$ are maximally complex, and property (iii) is inherited by the hyperplane). The high dimension of $M$ is needed in order to get the maximal complexity of slices (in an Hilbert space a moments condition makes less sense than in $\mathbb C^n$). We give a simple example (see example~\ref{parapertissima}) showing that relaxing hypothesis (ii) can lead to the slices of $\Omega$ (thus of $M$) being unbounded. On the other hand, hypothesis (iv) is unnecessary (because always satisfied) if the following topological conjecture (by Williamson and Janos, 1987~\cite{WJ}) is true.

\begin{conj}\label{conj87} A complete admissible metric $d$ for a $\sigma$-compact, locally compact
space $X$ is always a Heine-Borel metric if
$$Cl\{x\in X\ |\ d(x,x_0)<r\}\, =\, \{x\in X\ |\ d(x,x_0)\leq r\},\  \forall x_0\in X, \ \forall r > 0\,.$$
\end{conj}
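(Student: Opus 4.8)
The plan is to reduce the Heine–Borel property to the compactness of the closed balls $\overline{B}_r=\{x\in X\mid d(x,x_0)\le r\}$ about one fixed basepoint $x_0$, and then to run a connectedness argument in the radius $r$. Since every $d$-bounded closed set is contained in some such $\overline{B}_r$, and a closed subset of a compact set is compact, it suffices to prove that each $\overline{B}_r$ is compact. Set
$$C=\{\,r\ge 0 \mid \overline{B}_r \text{ is compact}\,\}.$$
By local compactness $x_0$ has a compact neighbourhood $K$, and since $d$ is admissible there is $\varepsilon>0$ with $\overline{B}_\varepsilon\subseteq K$; hence $\overline{B}_\varepsilon$ is closed in a compact set and so compact, and $C\neq\emptyset$. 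As $\overline{B}_s\subseteq\overline{B}_r$ for $s\le r$, the set $C$ is downward closed, so $C=[0,R^*)$ or $[0,R^*]$ with $R^*=\sup C\in(0,\infty]$.

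The goal is then to show $R^*=\infty$. By connectedness of $[0,\infty)$ this would follow from two claims: \emph{closedness at the endpoint}, that $[0,R^*)\subseteq C$ forces $R^*\in C$; and \emph{openness at the endpoint}, that $R^*\in C$ with $R^*<\infty$ forces $[R^*,R^*+\delta)\subseteq C$ for some $\delta>0$. The ball hypothesis enters through the identity $\overline{B}_{R^*}=\mathrm{Cl}(B_{R^*})=\mathrm{Cl}\big(\bigcup_{r<R^*}\overline{B}_r\big)$, which presents $\overline{B}_{R^*}$ as the closure of an increasing union of compact sets; since $(X,d)$ is complete, its compactness is equivalent to total boundedness of that union. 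For openness one would cover the compact set $\overline{B}_{R^*}$ by finitely many relatively compact open sets with union $U$, observe that $\delta_0:=d(\overline{B}_{R^*},X\setminus U)>0$, and try to deduce that a slightly larger ball is still trapped in the compact set $\overline{U}$.

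It is here that the main obstacle lies, and I expect it to be a genuine one, consistent with the statement being an open conjecture. Both claims amount to the same assertion: that a $d$-bounded sequence has a convergent subsequence, i.e.\ that bounded sets are totally bounded. The ball hypothesis is exactly what excludes the naive counterexamples: rewritten as $\mathrm{Cl}(\{d(\cdot,x_0)<r\})=\{d(\cdot,x_0)\le r\}$ for all $r>0$, it says that the radial function $\rho=d(\cdot,x_0)$ has no local minimum at any positive value, every sphere being approachable from inside; this forbids, for instance, the discrete metric on a countable set, where a sphere may be detached from the open ball it bounds. However, this approachability is only pointwise and carries no uniform lower bound on how far one must travel to decrease $\rho$. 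Consequently a sequence $z_k$ with $\rho(z_k)\to R^*$ that is $\varepsilon$-separated and escapes every compact set is not obviously excluded: each $z_k$ can be approximated from inside by points of smaller $\rho$, yet the approximating points may be forced arbitrarily far from the earlier compact balls $\overline{B}_r$ with $r<R^*$, so one cannot trap the tail of $(z_k)$ in a compact neighbourhood $\overline{U}$ of $\overline{B}_{R^*}$. Turning the qualitative no-local-minimum condition into the quantitative uniform-thickness estimate needed to rule out such bounded escape to infinity is the crux; $\sigma$-compactness, via an exhaustion $X=\bigcup_n K_n$ with $K_n\subseteq\mathrm{int}\,K_{n+1}$, reformulates the target as ``each $\overline{B}_r$ lies in some $K_n$'' but does not by itself supply that estimate. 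This is precisely the gap that keeps hypothesis (iv) in Theorem~\ref{main} and leaves the conjecture open.
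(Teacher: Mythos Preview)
The statement you were asked to address is Conjecture~\ref{conj87}, attributed in the paper to Williamson and Janos (1987). The paper does \emph{not} prove it: it is stated as an open conjecture and used only as a hypothesis in Theorem~1.2, whose proof begins ``We thus assume that Conjecture~\ref{conj87} holds true\ldots''. There is therefore no paper proof to compare against.

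Your write-up is consistent with this. You set up the natural radius-connectedness argument (showing the set $C$ of radii with compact closed ball is nonempty and downward closed, and trying to prove $R^*=\sup C=\infty$), and then correctly identify the obstruction: the ball hypothesis $\mathrm{Cl}\{d<r\}=\{d\le r\}$ is only a pointwise ``approachability from inside'' condition and does not supply the uniform estimate needed to trap a bounded sequence with $\rho(z_k)\to R^*$ inside a compact neighbourhood of $\overline{B}_{R^*}$. You explicitly flag this as the gap that keeps the conjecture open, which matches the paper's treatment (the conjecture is precisely what would render hypothesis~(iv) of Theorem~\ref{main} redundant). So your submission is not a proof and does not purport to be one; as a discussion of why the obvious strategy stalls, it is accurate.
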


Using the conjecture of Williamson and Janos, we can get rid of one annoying technical hypothesis:

\begin{theorem} Assume Conjecture~\ref{conj87} is true.

Let $H$ be a complex Hilbert space, and $M\subset H$ such that
\begin{itemize}
\item[(i)]  $M$ is a smooth maximally complex manifold of real dimension $2m+1\geq5$ (complex dimension $m\geq2$);
\item[(ii)] $M\subset b\Omega\subset H$, where $\Omega$ is a strongly convex domain;
\item[(iii)] there exists an orthogonal decomposition
$H = \mathbb C^{m+1}\times H'$ such that orthogonal the projection $p : H \to \mathbb C^{m+1}$, when restricted to $M$, is a closed immersion with transverse self-intersections.
\end{itemize}
Then there exists a unique analytic chain of finite dimension
$T$ in $\Omega$ with isolated singularities, such that the boundary of $T$ is $M$.
\end{theorem}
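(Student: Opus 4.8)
The plan is to deduce this theorem from Theorem~\ref{main} by showing that, once Conjecture~\ref{conj87} is granted, hypotheses (i)--(iii) already force $M$ to be quasi-locally compact, so that hypothesis (iv) of Theorem~\ref{main} becomes redundant and the two statements have literally the same content. Concretely, I would fix on $M$ the metric $d$ appearing in the definition of quasi-local compactness (concretely, the one induced by the Hilbert norm of $H$; an intrinsic choice would serve equally well), and check that $(M,d)$ satisfies the four hypotheses of Conjecture~\ref{conj87}; the conclusion of the conjecture, namely that $d$ is a Heine--Borel metric for $M$, is then exactly what quasi-local compactness of $M$ asserts, and Theorem~\ref{main} finishes the argument with the same chain $T$, the same uniqueness statement, and the same structure of the singular set.

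Three of the four hypotheses of Conjecture~\ref{conj87} are soft. Being a smooth manifold, $M$ is locally compact; it is second countable --- property (iii) exhibits it as a closed immersed submanifold of the second countable space $\mathbb C^{m+1}$ --- hence $\sigma$-compact. Since $M$ is closed in the complete space $H$, the metric $d$ is complete, and since $d$ induces the manifold topology of $M$, it is admissible.

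The remaining hypothesis, and the step I expect to be the real obstacle, is the ball--closure identity
$$\mathrm{Cl}\{x\in M\ :\ d(x,x_0)<r\}=\{x\in M\ :\ d(x,x_0)\le r\}\qquad\text{for all }x_0\in M,\ r>0.$$
The inclusion $\subseteq$ is automatic; the content is $\supseteq$, i.e.\ that every point $y\in M$ with $d(y,x_0)=r$ is accessible from within $M$ through points at distance strictly less than $r$ from $x_0$. This is precisely where strong convexity of $\Omega$ (hypothesis (ii)) is indispensable, and where hypothesis (iii) enters to keep the argument uniform: for a strongly convex boundary $b\Omega$ the squared-distance function $x\mapsto\|x-x_0\|^2$ has no degenerate behaviour --- in particular $M$ cannot contain an open piece of a Euclidean sphere centred at one of its own points, and the ``far'' critical configurations of $\|\,\cdot\,-x_0\|^2$ along $M$ that could isolate a level set $\{d(\cdot,x_0)=r\}$ inside $\{d(\cdot,x_0)\geq r\}$ are excluded --- whereas in the merely convex case flat portions of $b\Omega$ would permit exactly such pathologies (this is, at bottom, the mechanism behind example~\ref{parapertissima}). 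Carrying this through, one obtains that $d$ meets all the assumptions of Conjecture~\ref{conj87}, hence is Heine--Borel, hence $M$ is quasi-locally compact, and the theorem follows from Theorem~\ref{main}.
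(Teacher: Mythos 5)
Your overall strategy coincides with the paper's: put on $M$ the metric $d$ induced by the norm of $H$, check the hypotheses of Conjecture~\ref{conj87} (local compactness, $\sigma$-compactness, completeness/admissibility, and the ball-closure identity), conclude that $d$ is Heine--Borel, deduce that hypothesis (iv) of Theorem~\ref{main} holds automatically, and then quote Theorem~\ref{main}. The routine verifications you give are fine and match the paper's.

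The problem is the step you yourself single out: the ball-closure identity $\mathrm{Cl}\{y\in M:\ d(y,x_0)<r\}=\{y\in M:\ d(y,x_0)\le r\}$ is never actually proved. You claim that strong convexity of $\Omega$ together with (iii) excludes the bad ``critical configurations'', but no argument is carried out (``Carrying this through, one obtains\dots''), and what must be excluded is quite concrete: a point $y\in M$ with $\|y-x_0\|=r$ at which $\|\cdot-x_0\|$ restricted to $M$ has a strict local minimum with value $r$; such a point lies in the closed ball but not in the closure of the open one. Nothing in your sketch rules this out: strong convexity constrains the hypersurface $b\Omega$, but $M$ is a finite-dimensional submanifold of the infinite-dimensional $b\Omega$ and could a priori bend back towards $x_0$ inside $b\Omega$; plurisubharmonicity of $\|\cdot-x_0\|^2$ forbids local maxima along CR directions, not local minima, so the ``no piece of a sphere'' remark does not settle the point. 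Moreover the mechanism you invoke is not the paper's: in the paper's proof of this theorem hypotheses (ii) and (iii) play no role in verifying the conjecture's assumptions --- the authors simply note that in $H$ the closure of $\{\|x-x_0\|<\rho\}$ is $\{\|x-x_0\|\le\rho\}$ and assert that the same identity persists after intersecting with $M$, then apply the conjecture; also, Example~\ref{parapertissima} concerns unboundedness of slices in the proof of Theorem~\ref{main}, not this closure property. So to complete your proposal you must either supply an actual proof of the ball-closure identity on $M$ (this is the entire content of the reduction) or justify, as the paper does, why the identity in $H$ survives intersection with $M$; as written, the decisive hypothesis of the conjecture is left as an unproven claim.
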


\section{Notations and definitions}

In the following, we denote by $H$ a complex Hilbert space and by $B(x,\rho)$ the (open) ball of center $x\in H$ and radius $\rho>0$. We introduce the following \emph{quasi-local} property (following the terminology of \cite{M1}).

\begin{definition}\label{def_strcvx1}We say that $K\subset H$ is \emph{quasi-locally compact} if, for any $x\in H$, for any $\rho>0$, the set $B(x,\rho)\cap K$ is relatively compact in $H$.\end{definition}

\begin{definition}\label{def_strcvx2}Given an open set $\Omega\subset H$ with smooth boundary, we call it \emph{strongly convex at $x\in b\Omega$} if the Hessian form of the boundary at $x$ satisfies $\mathrm{Hess}_x(\cdot, \cdot)\geq \varepsilon\|\cdot\|^2$ for some fixed $\varepsilon>0$.

We call $\Omega\subset H$ \emph{strongly convex} if it is strongly convex at all its boundary points.
\end{definition}

Let $x\in b\Omega$ be a point of strong convexity, then for every $2$-dimensional real plane $P$ containing the normal to $b\Omega$ at $x$ the set $P\cap\Omega$ is a convex set locally (around $x$) contained in a parabola. Considering the cone delimited by two tangents lines to such a parabola, symmetric with respect to its axis and close enough to $x$, we note that $P\cap\Omega$ lies inside such a cone, by convexity; the angle of such a cone depends only on the $\varepsilon$ in the definition of strong convexity.

This holding for for every plane $P$ and the angle of the cone not depending on $P$, we have that $\Omega$ is contained in a cone of $H$ with fixed angle. Given a real hyperplane $L$ intersecting the cone in a bounded set, all its translations along the axis of the cone will intersect the cone in bounded sets, therefore if $L$ intersects $\Omega$ just in $x$ and is tangent to $b\Omega$, then all its translations alond the axis of the cone will intersect $\Omega$ in bounded sets.

Moreover, if $\Omega$ is unbounded, $\nu$ is the unit vector pointing in the direction of the axis of the cone (with the correct orientation) and $C_0\in\R$ is such that $(L+C_0\nu)\cap \Omega\neq \emptyset$, then the intersection $(L+C\nu)\cap\Omega$ is non-empty for $C\geq C_0$. Indeed, if $(L+C_1\nu)\cap \Omega=\emptyset$ for $C_1>C_0$, then $(L+C\nu)\cap\Omega=\emptyset$ for every $C>C_1$ by convexity.

Therefore $\Omega$ is contained in the intersection between the cone constructed above and $\{L<C_1\}$; such an intersection is bounded, which is an absurd.

\begin{definition}We recall one of the equivalent definitions of finite-dimensional analytic subvariety of an infinite dimensional complex space: $A\subset H$ is said to be a \emph{finite-dimensional analytic subvariety} if for any $x\in H$ there exist an open neighborhood $U$ of $x$ and a finite-dimensional complex manifold $W$ of $U$ such that $A\cap U\subset W$ and $A\cap U$ is an analytic subvariety of $W$. 
\end{definition}

See \cite{M} for some examples and a discussion of the relations between this definitions and the others that can be found in the literature.

\medskip

In this paper, $M$ will denote a smooth finite-dimensional manifold in $H$, of real dimension $2m+1$ greater than or equal to $5$ and $p$ will always be the projection whose existence is required in the third hypothesis of Theorem \ref{main}. $H_x(M)$ will be the holomorphic tangent to $M$ at $x$, i.e. $H_x(M)=T_xM\cap JT_xM$, where $J$ is the natural complex structure on $T_xH\cong H$.

$M$ will also be required to be \textit{maximally complex}, i.e.\ such that $\dim_{\mathbb C}\, H_x(M)=m$ at all points $x\in  M$, since maximal complexity is a necessary condition for being the boundary of a complex variety.

\medskip

Given a smooth real hypersurface $S$ in $H$, we denote by $\mathcal L_x(S)$
the Levi form of $S$ at the point $x$; we note that, if $S$ is the boundary of a strongly convex open set $\Omega$, then $\mathcal{L}_x(S)$ is positive definite for every $x\in S$, i.e. a strongly convex open set is strongly pseudoconvex.

\section{The local and semi-global results}
The aim of this section is to prove the local result.  Let $0$ be a point of $M\subset S$. We
have the following inclusions of tangent spaces: $$ H\
\supset\ T_0(S)\ \supset\ H_0(S)\ \supset\ H_0(M);$$ $$
\phantom{\C^n\ \supset}\ T_0(S)\ \supset\ T_0(M)\ \supset\
H_0(M).$$

\begin{lemma}\label{wk} Let $M$ be a maximally complex submanifold of a smooth real hypersurface $S\subset H$, $\dim_{\mathbb R}(M)=2m+1$, $m\geq1$. Suppose that, for any finite dimensional complex subspace $V$ of $H$, the restriction $\mathcal L_0(S\cap V)$ has all but at most $m$ eigenvalues of the same sign. Then
$$H_0(S)\not\supset T_0(M).$$
\end{lemma}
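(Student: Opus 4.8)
I want to show that the holomorphic tangent space $H_0(S)$ does not contain the real tangent space $T_0(M)$. Suppose for contradiction that $T_0(M)\subset H_0(S)$. Since $M$ is maximally complex, $H_0(M)=T_0M\cap JT_0M$ is a complex subspace of complex dimension $m$, hence real dimension $2m$, sitting inside $T_0M$ which has real dimension $2m+1$. So $T_0M = H_0(M)\oplus \mathbb R v$ for some real vector $v\notin H_0(M)$; because $\dim_{\mathbb R}T_0M$ is odd, $T_0M$ itself is not a complex subspace, so $Jv\notin T_0M$ — in particular $v$ and $Jv$ are linearly independent modulo $H_0(M)$.

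The strategy is to descend to a finite-dimensional slice where the hypothesis on the Levi form applies, and derive a contradiction there. First I would choose a finite-dimensional complex subspace $V$ of $H$ that contains both $v$ and $Jv$ and also contains the complex line they span together with enough of $H_0(M)$; more precisely, pick $V$ to be a complex subspace of finite dimension with $v,Jv\in V$ and such that $V\cap T_0M$, $V\cap H_0(S)$ have the "expected" codimensions — this is possible because finite-dimensional subspaces can be chosen to intersect a fixed finite list of closed subspaces transversally (after enlarging $V$ finitely). Under the contradiction hypothesis $T_0M\subset H_0(S)$, inside $V$ we then have $T_0(M\cap V)\subset H_0(S\cap V)$, and $M\cap V$ is still maximally complex in $S\cap V$ as a real hypersurface of $V$ (its holomorphic tangent is $H_0(M)\cap V$, of the right dimension by the transversality choice). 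Now the hypothesis says $\mathcal L_0(S\cap V)$ has all but at most $m$ eigenvalues of one sign.

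The key computation is the standard one relating the Levi form of a real hypersurface to the second fundamental form restricted to a real tangent direction not in the holomorphic tangent: if $v\in T_0(M\cap V)$ but $v\notin H_0(S\cap V)$ is false — wait, here $v\in T_0M\subset H_0(S)$ by assumption — the point is that $v$ and $Jv$ both lie in $H_0(S\cap V)$, so the Levi form $\mathcal L_0(S\cap V)$ can be evaluated on the complex line $\mathbb C v\subset H_0(S\cap V)$. The contradiction comes from the classical Harvey–Lawson-type argument: the maximal complexity of $M\cap V$ forces the Levi form of $S\cap V$, when restricted to a complementary-to-$H_0(M)$ complex direction inside $T_0M$, to have both a positive and a negative eigenvalue (essentially because $T_0M$ is "one real dimension too big" to be complex, so it meets the complex line $\mathbb C v$ in a real line, forcing an indefiniteness on that $\mathbb C$-plane); combined with $\mathcal L_0(S\cap V)$ having rank-type restrictions on the remaining $\ge 2$ (real $\ge 4$) dimensions of $H_0(S\cap V)/\text{(that plane)}$, I get more than $m$ eigenvalues failing to share a sign, contradicting the hypothesis. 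I expect the main obstacle to be the bookkeeping in this eigenvalue count: making precise how "$T_0M\subset H_0(S)$ plus maximal complexity" produces a prescribed number of eigenvalues of each sign in $\mathcal L_0(S\cap V)$, and checking that the finite-dimensional slice $V$ can simultaneously be taken to respect all the relevant subspaces $H_0(M), T_0M, H_0(S), T_0S$ so that the restricted objects have exactly the dimensions needed to invoke the hypothesis with the threshold $m$.
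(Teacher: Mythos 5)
There is a genuine gap, and it occurs at the very first step of your reduction. You propose to slice by intersecting $M$ with a finite-dimensional complex subspace $V$ and to treat $M\cap V$ as a maximally complex $(2m+1)$-dimensional submanifold of the hypersurface $S\cap V$. But $M$ is itself a finite-dimensional submanifold of the infinite-dimensional $H$, so $M\cap V$ is generically a single point (compare a curve and its tangent line: they meet only at the point of tangency); no ``transversality'' choice can help, since $T_0M+V$ is finite-dimensional and the expected dimension of $M\cap V$ is negative. The paper's proof avoids exactly this trap: it intersects the finite-dimensional subspace with $S$, not with $M$. Writing $T_0(H)=T_0(S)\oplus l$, it sets $L=T_0(M)\oplus l$, takes $L^{\mathbb C}$ the complex span, and uses the auxiliary manifold $M_1=L\cap S$ inside the slice $S_1=L^{\mathbb C}\cap S$; since $l$ is transverse to $T_0(S)$, $M_1$ is a $(2m+1)$-manifold with $T_0(M_1)=T_0(M)$ and $H_0(M_1)=H_0(M)$, and $H_0(S_1)=H_0(S)\cap L^{\mathbb C}$, so the conclusion of the finite-dimensional lemma (Lemma 3.1 of the Della Sala--Saracco paper, applied using the hypothesis with $V=L^{\mathbb C}$) transfers back to $M$ and $S$. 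Your slice does not carry the tangent data of $M$ in any usable way.

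The second gap is that the finite-dimensional contradiction you invoke is not actually proved, and the part you defer as ``bookkeeping'' is the real content. From $T_0(M)\subset H_0(S)$ and maximal complexity, the cheap consequence (via brackets of sections of $H(M)$ remaining tangent to $M$) is that the Levi form of the slice vanishes on the $m$-dimensional complex subspace $H_0(M)$; but an $m$-dimensional totally isotropic subspace is exactly the borderline allowed by ``all but at most $m$ eigenvalues of the same sign,'' so this alone yields no contradiction. Your stronger claim, that indefiniteness is forced on a ``complementary-to-$H_0(M)$ complex direction inside $T_0M$,'' is unsubstantiated and in fact incoherent as stated: no complex direction of $T_0M$ complementary to $H_0(M)$ exists, precisely because $\dim_{\mathbb R}T_0M$ is odd and $M$ is maximally complex (this is why one must pass to $\mathbb C v\not\subset T_0M$, where the vanishing argument no longer applies). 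The paper does not reprove this delicate eigenvalue analysis; it cites the finite-dimensional result, and so should you, or else you must supply the refined argument in full.
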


\begin{proof} We will reduce problem to the finite dimensional case proved in~\cite{DS1} (Lemma 3.1; for a different proof see also Lemma 2.3.1 in~\cite{TDS}).
Let us consider the following orthogonal decomposition of $T_0(H)$:
$$T_0(H) \ =\ T_0(S) \oplus l\,,$$
and let $L=T_0(M)\oplus l$ and $L^{\mathbb C}$ be the smallest complex subspace of $T_0(H)$ containing $L$.

Let $M_1=L\cap S$ (so $T_0(M)=T_0(M_1)$ and $H_0(M)=H_0(M_1)$) and $S_1=L^{\mathbb C}\cap S$ so $H_0(s)\cap L^{\mathbb C}=H_0(S_1)$.

$M_1\subset S_1$ satisfy the hypothesis of Lemma 3.1 in [3], so $H_0(S_1)\not\supset T_0(M_1)=T_0(M)$. Hence, since $H_0(s)\cap L^{\mathbb C}=H_0(S_1)$, $H_0(S)\not\supset T_0(M)$.
\end{proof}

The following lemma is an immediate consequence of a well-known fact. 

\begin{lemma}\label{Lewy} Let $D,\ D'$ be domains in a topological space $X$, with $D\subseteq D'$. Suppose we are given two Banach algebras (over $\mathbb K=\mathbb R,\mathbb C$) of continuous functions, $\mathcal{A}(D)$ and $\mathcal{B}(D')$ such that the restriction map
$$\mathcal{B}(D')\ni f\mapsto f\vert_D\in\mathcal{A}(D)$$
is bijective. Then
$$\parallel \hat{f}\parallel_{\mathcal B(D')}\ =\ \parallel f\parallel_{\mathcal A(D)}\,,$$
where $\hat{f}$ is the unique extension of $f$.
\end{lemma}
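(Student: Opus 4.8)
\textbf{Proof proposal for Lemma~\ref{Lewy}.}

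The statement is the elementary observation that an isometric identification of Banach algebras is forced once the restriction map is a bijective algebra homomorphism between algebras of continuous functions on nested domains; the phrase ``immediate consequence of a well-known fact'' points to the fact that a bijective continuous algebra homomorphism between \emph{uniform} (function) algebras, or more generally the open mapping / inverse boundedness for Banach algebra isomorphisms, is automatically norm-preserving when the norms are the supremum norms and the restriction is the natural one. The plan is therefore to make this precise in two short steps: first show $\parallel\hat f\parallel_{\mathcal B(D')}\ge\parallel f\parallel_{\mathcal A(D)}$, then the reverse inequality.

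First I would note that, since $D\subseteq D'$ and elements of $\mathcal B(D')$ are continuous functions, for every $g\in\mathcal B(D')$ one has $\sup_{D}|g|\le\sup_{D'}|g|$, hence $\parallel g\vert_D\parallel_{\mathcal A(D)}\le\parallel g\parallel_{\mathcal B(D')}$ because both norms are the sup-norms of the respective function algebras (this is the standing assumption that $\mathcal A(D)$ and $\mathcal B(D')$ are Banach \emph{algebras of continuous functions}, so their norms are the uniform norms). Applying this to $g=\hat f$, and using $\hat f\vert_D=f$, gives $\parallel f\parallel_{\mathcal A(D)}\le\parallel\hat f\parallel_{\mathcal B(D')}$.

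For the reverse inequality I would use that the restriction map $R:\mathcal B(D')\to\mathcal A(D)$ is a bijective bounded (by the previous step, with norm $\le1$) homomorphism of Banach algebras, so by the open mapping theorem its inverse $R^{-1}$, which sends $f$ to $\hat f$, is bounded; and since a unital Banach algebra isomorphism cannot decrease spectral radii, together with the fact that in a uniform algebra the norm equals the spectral radius, one gets $\parallel\hat f\parallel_{\mathcal B(D')}\le\parallel f\parallel_{\mathcal A(D)}$. More concretely: for each $n$, $(\hat f)^n=\widehat{f^n}$ by multiplicativity of the extension, so $\parallel\hat f\parallel_{\mathcal B(D')}=\lim_n\parallel(\hat f)^n\parallel_{\mathcal B(D')}^{1/n}=\lim_n\parallel\widehat{f^n}\parallel_{\mathcal B(D')}^{1/n}\le\lim_n\left(C\parallel f^n\parallel_{\mathcal A(D)}\right)^{1/n}=\lim_n\parallel f^n\parallel_{\mathcal A(D)}^{1/n}=\parallel f\parallel_{\mathcal A(D)}$, where $C=\parallel R^{-1}\parallel<\infty$. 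Combining the two inequalities yields the claimed equality. The only point requiring a little care — the ``well-known fact'' — is the identification of the Banach-algebra norm with the spectral radius (uniform-norm property); once that is granted, no further estimate is needed.
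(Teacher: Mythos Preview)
Your argument is correct, but the paper takes a shorter and more direct route. Instead of invoking the open mapping theorem to bound $\|R^{-1}\|$ and then killing the constant via the spectral-radius identity, the paper observes that for each point $x\in D'$ the evaluation $\chi_x:\mathcal A(D)\to\mathbb K$, $\chi_x(f)=\hat f(x)$, is a multiplicative linear functional (a character) on $\mathcal A(D)$; the ``well-known fact'' alluded to is simply that every character of a Banach algebra is automatically continuous of norm at most one. This gives $|\hat f(x)|\le\|f\|_{\mathcal A(D)}$ for all $x\in D'$ in one stroke, hence the nontrivial inequality $\|\hat f\|_{\mathcal B(D')}\le\|f\|_{\mathcal A(D)}$, with the reverse inequality being trivial from $D\subseteq D'$ just as in your first step. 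Your approach works but is more roundabout: the spectral-radius trick you use (applying $\|\cdot\|^{1/n}$ to $\widehat{f^n}=(\hat f)^n$ and letting $n\to\infty$) is essentially a disguised reproof of the automatic continuity of characters. What your version buys is that it makes the reliance on the uniform-algebra hypothesis (norm $=$ spectral radius) completely explicit, whereas the paper leaves this implicit; what the paper's version buys is brevity and the avoidance of the open mapping theorem altogether.
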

\begin{proof} Let $f\in \mathcal A(D)$. Let $x$ be any point in $D'$. We can define
$$\chi_x:A(D)\to\mathbb K\ \ \ \ \chi_x(f)=\hat{f}(x),$$
where $\hat{f}$ is the unique extension of $f$. $\chi_x$ is a character of the Banach algebra $\mathcal A(D)$, therefore continuous of unitary norm. Thus
$$|\hat {f}(x)|\ =\ |\chi_x(f)|\ \leq\ \parallel f\parallel_{\mathcal A(D)},\ \ \ \forall x\in D'\,.$$
Hence the thesis.
\end{proof}

The typical setting in which the previous lemma applies is when one is concerned with extension of analytic functions.

\begin{lemma}\label{L3} In the setting of Lemma \ref{wk}, assume that $S$
is the boundary of an unbounded domain $\O\subset H$, $0\in M$
and that the Levi form of $S$ has at most $m$ non-positive
eigenvalues. Then
\begin{enumerate}
\item[\emph{(i)}] there exists an open neighborhood $U$ of $0$ and an $(m+1)$-dimensional complex submanifold $W_0\subset U$
 with boundary, such that $bW_0=M\cap U$;
\item[\emph{(ii)}] $W_0\subset\O\cap U$.
\end{enumerate}
\end{lemma}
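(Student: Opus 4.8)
The plan is to reduce the statement to a finite-dimensional Lewy-type extension theorem, exactly as in the classical Harvey--Lawson local extension, using Lemma \ref{wk} to produce the hypersurface direction along which the CR-function extends, and Lemma \ref{Lewy} to transfer the necessary sup-norm bounds from the finite-dimensional model to the Hilbert-space setting. First I would use Lemma \ref{wk}: since $S$ bounds a strongly convex (hence strongly pseudoconvex) domain, its Levi form is positive definite, so a fortiori has at most $m$ non-positive eigenvalues, and the restriction $\mathcal L_0(S\cap V)$ to any finite-dimensional complex subspace $V$ likewise has at most $m$ non-positive eigenvalues (in fact none). Hence $H_0(S)\not\supset T_0(M)$, which means that $M$ is \emph{generic} in $S$ near $0$ in the sense that $T_0(M)+H_0(S)=T_0(S)$; equivalently, $M$ is not contained in any complex hypersurface tangent to $S$ at $0$, so there is a genuine CR-direction transverse to $M$ inside $S$ along which one-sided extension into $\Omega$ is possible.

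Next I would realize the extension concretely. Choosing a finite-dimensional complex subspace $V\subset H$ containing $T_0(M)$ and a complementary hypersurface direction, I set $S_V=S\cap V$, $M_V=M\cap V$ (after possibly shrinking a neighborhood $U$ of $0$, using hypothesis (iii) to make $M\cap U$ a graph over its projection, so that finite-dimensional slices are well-behaved submanifolds). By the finite-dimensional Harvey--Lawson / Lewy extension result (as used in \cite{DS1}), a CR-function on $M_V$ extends holomorphically to a one-sided wedge inside $\O\cap V$ whose edge is $M_V$; applying this to the $m+1$ coordinate functions $z_1,\dots,z_{m+1}$ of the projection $p$ restricted to $M$ gives a holomorphic map on a wedge, whose image is an $(m+1)$-dimensional complex submanifold $W_0$ with boundary $M\cap U$. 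To pass from the finite-dimensional model to $H$, I invoke Lemma \ref{Lewy} with $D$ a small piece of $M$ and $D'$ the wedge: the algebra of CR-functions on $D$ restricts bijectively from the algebra of functions holomorphic on $D'$ and continuous up to the edge, so the extension is norm-controlled and, in particular, the extended coordinate functions stay bounded; this gives that $W_0$ actually sits inside $U\subset H$ and is an honest finite-dimensional analytic subvariety in the sense of the paper.

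For part (ii), that $W_0\subset\O\cap U$, the point is that the one-sided extension goes into the pseudoconvex side. Since $\O$ is strongly convex, near $0$ it lies strictly on one side of its tangent hyperplane, and the wedge produced by Lewy extension opens into the side where the Levi form is positive, i.e. into $\O$; thus after shrinking $U$ the whole manifold-with-boundary $W_0$ is contained in $\O\cap U$, with $bW_0=M\cap U\subset b\O$. The main obstacle I anticipate is the bookkeeping at the interface between the infinite-dimensional and finite-dimensional pictures: one must check that the finite-dimensional slices $S_V$, $M_V$ retain the relevant convexity/Levi-positivity and genericity uniformly as $V$ varies, that the wedge and its size can be chosen independently of the auxiliary $V$ (so that the extensions for different coordinate functions are compatible and assemble into a single map into $\C^{m+1}$), and that Lemma \ref{Lewy} applies with the correct pair of Banach algebras — i.e. that the restriction map from wedge-holomorphic functions to CR-functions on the edge is genuinely bijective in this setting, which is where the strong convexity and hypothesis (iii) are really used.
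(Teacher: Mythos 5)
The central construction in your proposal is not the one that works, and as written it would not produce $W_0$. In the paper one first shows, by a bracket argument combining maximal complexity of $M$ with the hypothesis that $\mathcal L_0(S)$ has at most $m$ non-positive eigenvalues, that the Levi form of $M$ at $0$ has a nonzero eigenvalue; one then takes a generic projection $\pi:H\to\C^{m+1}$ which is a local embedding of $M$ near $0$, so that $\pi(M)$ is a real hypersurface of $\C^{m+1}$ whose Levi form inherits a positive eigenvalue, and applies the Lewy theorem to the $H'$-valued CR map $\pi^{-1}|_{\pi(M)}$; the graph of that one-sided extension is $W_0$. Your construction goes the other way: you slice, setting $M_V=M\cap V$ for a finite-dimensional $V\supset T_0(M)$, and you extend the $m+1$ coordinate functions $z_1,\dots,z_{m+1}$ of $p|_M$ to a wedge. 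This fails on two counts. First, $M\cap V$ is in general a much lower-dimensional set than $M$ (containing $T_0(M)$ in $V$ does not make $M\cap V$ a faithful finite-dimensional model of $M$ near $0$); the correct finite-dimensional reduction is projection, not slicing, and an extension attached to $M\cap V$ says nothing about $M\cap U$. Second, the extension of $z_1,\dots,z_{m+1}$ maps into $\C^{m+1}$, so its image cannot be an $(m+1)$-dimensional complex submanifold of $H$ with boundary $M\cap U$; the functions that must be extended are the infinitely many $H'$-components of $\pi^{-1}$ on the hypersurface $\pi(M)$, not finitely many coordinates of $p$ on $M$.

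This also means your use of Lemma \ref{Lewy} misses the point where it actually does work. After extending each component $\pi^{-1}_j$ to a fixed one-sided neighborhood $U$, one must prove that $\sum_j \mathbf{e}_j p_j$ converges uniformly on $U$, so that the extension is a genuine $H'$-valued holomorphic map; the paper obtains this by applying Lemma \ref{Lewy} to arbitrary finite linear combinations and choosing $a_j=\overline{p_{i_j}(z_0)}$, which gives $\bigl\|\bigl(\sum_j|p_{i_j}|^2\bigr)^{1/2}\bigr\|_{U}\le\bigl\|\bigl(\sum_j|\pi^{-1}_{i_j}|^2\bigr)^{1/2}\bigr\|_{\pi(M)}$ and hence the Cauchy property of the partial sums. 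Saying only that "the extended coordinate functions stay bounded" does not address this convergence, which is the genuinely infinite-dimensional difficulty of the lemma. Finally, note that the lemma assumes only that the Levi form of $S$ has at most $m$ non-positive eigenvalues, not strong convexity: the positive eigenvalue of the Levi form of $\pi(M)$ needed to invoke Lewy must be produced from that weaker hypothesis (this is the bracket argument you skip), and part (ii) requires both the orientation argument (the extension lives on the side toward which the projection of the normal of $S$ pointing into $\Omega$ lies) and the transversality statement of Lemma \ref{wk}; your appeal to strong convexity of $\Omega$ imports a hypothesis the lemma does not have.
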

\begin{proof} 

To prove the first assertion, observe that to obtain $\mathcal L^M_0(\zeta_0,\oli \zeta_0)$
it suffices to choose a smooth local section $\zeta$ of $H_0(M)$
such that $\zeta(0) = \zeta_0$ and compute the
projection of the bracket $[\zeta,\oli\zeta](0)$ on the real part
of $T_0(M)$. By hypothesis, the intersection of the space
where $\mathcal L_0(S)$ is positive with $H_0(M)$ is non empty; take $\eta_0$ in this intersection. Then $\mathcal L_0^M(\eta_0, \oli\eta_0)\neq 0$. Suppose, by contradiction, that the bracket $[\eta,\oli\eta](0)$ lies in $H_0(M)$, i.e.\ its projection on the real part of the tangent of $M$ is zero. Then, if $\widetilde{\eta}$ is a local smooth extension of the field $\eta$ to $S$, we have $[\widetilde{\eta},\oli{\widetilde{\eta}}](0)= [\eta,\oli\eta](0)\in H_0(M)$. Since $H_0(M)\subset H_0(S)$, this would mean that the Levi form of $S$ in $0$ is zero in $\eta_0$.
Now, we project (generically) $M$ over a $\C^{m+1}$ in such a way that the
projection $\pi: H\to\mathbb C^{m+1}$ is a local embedding of $M$ near $0$: since the
restriction of $\pi$ to $M$ is a $CR$ function, and since the Levi
form of $M$ has - by the arguments stated above - at least one
positive eigenvalue, it follows that the Levi form of $\pi(M)$ has
at least one positive eigenvalue. Thus, in order to obtain $W_0$,
it is sufficient to apply the Lewy extension theorem \cite{Le} to
the $CR$ function $\pi^{-1}|_{\pi(M)}$.

In order to ensure that the extension lies in the Hilbert space $H$, we consider the orthonormal decomposition found before $H=\mathbb C^{m+1}\times H'$. Let $\mathbf{e}_j$ be a complex base of $H'$, and $\pi^{-1}_j(M)$  the $\mathbf{e}_j$ coordinate of $\pi^{-1}|_{\pi(M)}$. We can apply Lewy's theorem to extend all of the functions $\pi^{-1}_j$ to a fixed one-sided open neighbourhood $U$ of $0\in\pi(M)$; let us provisionaly denote by $p_j$ the extension of $\pi^{-1}_j$.

For any positive integer $k$, for any $k-$tuple $(i_1,\ldots, i_k)$ and for any $a\in\C^k$, the scalar function
$$f=a_1\pi^{-1}_{i_1}+\ldots+a_k\pi^{-1}_{i_k}$$
extends to $U$ by
$$F=a_1p_{i_1}+\ldots+a_kp_{i_k}\;.$$
Therefore, by Lemma~\ref{Lewy}, we know that
$$\|F\|_{U}\leq\|f\|_{\pi(M)}\leq\|a\|_{\C^k}\left\|\left(\sum_{j=1}^k|\pi^{-1}_{i_j}|^2\right)^{1/2}\right\|_{\pi(M)}.$$
Let $z_0\in U$ and take $\overline{a}_j=p_{i_j}(z_0)$ for $j=1,\ldots, k$. Then
$$|p_{i_1}(z_0)|^2+\ldots+|p_{j_k}(z_0)|^2\leq\|\overline{p_{i_1}(z_0)}p_{i_1}(z)+\ldots+\overline{p_{i_k}(z_0)}p_{i_k}(z)\|_{U}$$
$$\leq\left(|p_{i_1}(z_0)|^2+\ldots+|p_{j_k}(z_0)|^2\right)^{1/2}\left\|\left(\sum_{j=1}^k|\pi^{-1}_{i_j}|^2\right)^{1/2}\right\|_{\pi(M)}$$
and, letting $z_0$ vary in $U$, we have
$$\left\|\left(\sum_{j=1}^k|p_{i_j}|^2\right)^{1/2}\right\|_{U}\leq\left\|\left(\sum_{j=1}^k|\pi^{-1}_{i_j}|^2\right)^{1/2}\right\|_{\pi(M)}.$$
This implies that, if the sequence of the partial sums of
$$\sum_i {\bf e_i}\pi^{-1}_i$$
is a Cauchy sequence on $\pi(M)$ with respect to the supremum norm, then the same holds true for the sequence of partial sums of
$$\sum_i {\bf e_i} p_i$$
on $U$ with respect to the supremum norm, implying the convergence of the latter to a holomorphic map from $U$ to $H'$.

As for the second statement, we observe that the projection by
$\pi$ of the normal vector of $S$ pointing towards $\O$ lies into
the domain of $\C^{m+1}$ where the above extension $W_0$ is
defined. Indeed, the extension result in \cite{Le} gives a holomorphic function in the connected component of (a neighborhood
of $0$ in) $H \setminus \pi(M)$ for which $\mathcal
L_0(\pi(M))$ has a positive eigenvalue, when $\pi(M)$ is oriented
as the boundary of this component. This is precisely the component
towards which the projection of the normal vector of $S$ points,
when the orientations of $S$ and $M$ are chosen accordingly. This
fact, combined with Lemma \ref{wk} (which states that any
extension of $M$ must be transverse to $S$) implies that locally
$W_0\subset\O\cap U$.
\end{proof}

\begin{remark}The open neighborhood $U$ of $O$ is not uniquely determined, but the germ of complex submanifold given by $(U,W)_O$ is. This follows from the uniqueness of extension of CR functions.\end{remark}

\begin{corollary}[Semi global existence]\label{L4} Let $M$ be a maximally complex submanifold of the smooth boundary $S$ of an unbounded domain $\O\subset H$, $0\in M$, $\dim_{\mathbb R}(M)=2m+1$, $m\geq1$. Assume that
\begin{enumerate}
\item $M$ is quasi-locally compact;
\item the Levi form of $S$ has at most $m$ non-positive
eigenvalues, at each point of $M$.
\end{enumerate}
Then there exist an open tubular neighborhood $I$ of
$S=b\Omega$ in $\oli \O$ and an $(m+1)$-dimensional complex submanifold $W_0$ of
\ $\oli\O \cap I$, with boundary, such that $S\cap bW_0=M$. \end{corollary}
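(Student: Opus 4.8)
The plan is to build $W_0$ by gluing together the local submanifolds produced by Lemma~\ref{L3} and then to place all of them inside a single collar of $S$ in $\oli\O$, using quasi-local compactness to obtain the required uniformity on bounded regions.

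\textbf{Step 1 (local pieces).} The hypotheses of Lemma~\ref{wk}, hence of Lemma~\ref{L3}, hold at every $x\in M$: if $\mathcal L(S)$ has at most $m$ non-positive eigenvalues then, restricting to a finite-dimensional complex subspace $V$, the form $\mathcal L_0(S\cap V)$ still has all but at most $m$ of its eigenvalues of one (positive) sign. So for each $x\in M$ we get an open neighbourhood $U_x\ni x$ and an $(m+1)$-dimensional complex submanifold with boundary $W_x\subset U_x$ with $bW_x=M\cap U_x$ and $W_x\subset\O\cap U_x$. Fix once and for all a collar neighbourhood $I_0$ of $S$ in $\oli\O$ (a smooth, coorientable hypersurface in a Hilbert space admits such a collar by the standard tubular neighbourhood theorem applied to its inward normal field), and shrink each $U_x$ so that $U_x\subset B(x,r_x)$ with $U_x\cap\oli\O\subset I_0$, so that $M\cap U_x$ is connected, and so that $U_x$ is one-sided with respect to $S$ as in the proof of Lemma~\ref{L3}.

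\textbf{Step 2 (gluing).} By the Remark following Lemma~\ref{L3}, the germ of $W_x$ along $M\cap U_x$ depends only on $M$, and a one-sided holomorphic (CR) extension across the real hypersurface $S$ is unique; hence, whenever $U_x\cap U_y\ne\emptyset$, the submanifolds $W_x$ and $W_y$ coincide on the connected component of $U_x\cap U_y\cap\oli\O$ meeting $M$. Therefore $W:=\bigcup_{x\in M}W_x$ is a well-defined $(m+1)$-dimensional complex submanifold with boundary of $\mathcal U\cap\oli\O$, where $\mathcal U:=\bigcup_{x\in M}U_x$ is an open neighbourhood of $M$ in $H$; moreover $W\subset I_0\cap\O$ and $S\cap bW=M$.

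\textbf{Step 3 (tubular neighbourhood, via quasi-local compactness).} It remains to find an open tubular neighbourhood $I\subset I_0$ of $S$ in $\oli\O$ in which $W$ is relatively closed; one then sets $W_0:=W$. Fix $R>0$. Since $U_x\subset B(x,r_x)$, only the pieces $W_x$ with $x\in M\cap B(0,R+\sup r_x)$ meet $B(0,R)$, and by Definition~\ref{def_strcvx1} the set $M\cap\overline{B(0,R)}$ has compact closure in $H$, so finitely many $U_{x_1},\dots,U_{x_N}$ cover a neighbourhood of it. Over $B(0,R)$ the submanifold $W$ thus coincides with the locally finite union $W_{x_1}\cup\cdots\cup W_{x_N}$, each term relatively closed in its chart; hence $W$ is relatively closed over $B(0,R)$ in $\oli\O\cap I$ once $I$ is shrunk slightly away from $M$ (keeping the width of $I$ over $S$ positive, but allowing it to tend to $0$ towards infinity along $S\setminus M$). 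Letting $R\to\infty$ and amalgamating these width functions by a partition of unity on $S$ yields the desired global $I$.

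\textbf{Main obstacle.} The crux is the interplay of Steps~2 and~3: one must know that the local one-sided extensions glue along $M$ with no monodromy — which is inherited from the uniqueness of CR extension used in the Remark — and that their union does not accumulate on $\oli\O\cap I$ away from itself as one moves to infinity. The latter is exactly what quasi-local compactness of $M$ buys, by reducing the patching over each ball $B(0,R)$ to a finite one; without it the behaviour of the pieces near infinity could be pathological, which is why hypothesis~(1) cannot be removed here.
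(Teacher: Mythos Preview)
Your proposal is correct and follows essentially the same route as the paper: apply Lemma~\ref{L3} at every point of $M$, glue the local pieces via the uniqueness of holomorphic extension of $CR$ functions (the Remark after Lemma~\ref{L3}), and use quasi-local compactness to organize the cover and sit everything inside a tubular neighbourhood of $S$. The paper's argument is terser --- it just extracts a countable cover and takes the union --- while you spell out the local finiteness over each ball $B(0,R)$ more carefully; one small slip is your appeal to ``$\sup r_x$'' in Step~3, which need not be finite as written, but this is harmless once you impose $r_x\le 1$ when shrinking the $U_x$ in Step~1.
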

\begin{proof} By Lemma~\ref{L3}, for each point $x\in M$, there exist a
neighborhood $U_x$ of $x$ and a complex manifold
$W_x\subset\oli\O\cap U_x$ bounded by $M$. Since $M$ is quasi-locally compact, we can cover $M$ with
countable many such open sets $U_i$, and consider the union
$W_0=\cup_i W_i$. $W_0$ is contained in the union of the $U_i$'s,
hence we may restrict it to a tubular neighborhood $I_M$ of $M$.
It is easy to extend $I_M$ to a tubular neighborhood $I$ of $S$.

The fact that $W_{i}|_{U_{ij}}=W_{j}|_{U_{ij}}$ if $U_i\cap
U_j=U_{ij}\neq\emptyset$ immediately follows from the construction made in
Lemma~\ref{L3}, in view of the uniqueness of the holomorphic extension of
$CR$ functions. \end{proof}

\section{The global result}

In this section we will prove Theorem~\ref{main} in the case when $\Omega$ is unbounded.

Since $\Omega$ is strongly convex, by the discussion following Definitions \ref{def_strcvx1} and \ref{def_strcvx2}, we can find a real hyperplane $$I=\{\mathfrak{Re}\,\lambda\,=\,0\}$$ with $\lambda$ a complex linear functional, tangent to $b\Omega$ in $0$, such that, for every translation
$$I_a=\{\mathfrak{Re}\,\lambda\,=\,a\},\ \ \ a\in\mathbb R^+$$
of $I$, $I_a\cap\overline\Omega$ is bounded (and not empty) and the same holds for nearby hyperplanes. Denoting by
$$L_k=\{\lambda\,=\,k\},\ \ \ k\in\mathbb C,\, \mathfrak{Re}\,k\in \mathbb R^+\,,$$
also $A_k=L_k\cap M$ is bounded and, by the quasi-local compactness of $M$, the slice $A_k$ of $M$ is compact. In view of Sard's lemma, up to modifying the equation of $L_k$ by means of another complex linear functional $\mu$
$$L_k=\{\lambda+\varepsilon \mu\,=\,k\}$$
we can suppose the slice $A_k$ to be smooth and a transversal intersection, hence of the correct dimension ($2m-1$). As a notation, we'll call suitable a slicing hyperplane $L_k$ that leads to a smooth, compact, transversal intersection, as above.

Thanks to the maximal complexity of $M$, it follows that each slice $A_k$ is maximally complex too. Moreover, the technical hypothesis (iii) of Theorem~\ref{main} is inherited by the slice.

Fix a point in $\Omega$. To this correspond a suitable slicing hyperplane $L_{k_0}$ of the above form, such that nearby parallel hyperplanes are suitable too. Each slice $A_k$, $k$ in a neighborhood $U$ of $k_0$, satisfies the hypotheses of the theorem in~\cite{M}, thus is the boundary of a holomorphic chain $\tilde{A}_k$ with support in the hyperplane $L_k$, which is a smooth manifold near $b\Omega$, since there it coincides with the manifold obtained in Corollary~\ref{L4}.

Our goal is now to glue together the slices $\tilde{A}_k$: $\tilde{A}_U=\cup_{k\in U}\tilde{A}_k$ and show that $\tilde{A}_U$ is a holomorphic chain, too (without singularities near $b\Omega$, due to Corollary~\ref{L4}).
\vspace{0.3cm}

It is worth observing that a strictly convexity hypothesis does not suffice to use our slicing method, as the following example shows.

\begin{ex}\label{parapertissima} Let $\Omega\subset H$ be defined by
$$\Omega\ =\ \left\{(z_n)_{n\in\N}\in H\ |\ x_0\ >\ y_0^2+\sum_{n=1}^\infty\frac{|z_n|^2}{n}\right\}\,.$$
Then $\Omega$ is strictly convex (i.e.\ convex and its boundary does not contain lines or line segments). But it is not strongly convex at $0\in b\Omega$.

Observe that the real tangent hyperplane
$$T_0b\Omega\ =\ \left\{(z_n)_{n\in\N}\in H\ |\ x_0\ =\ 0\right\}$$
is such that all its translated in the positive $x_0$ direction intersect $\Omega$ and $b\Omega$ in unbounded sets. That is also true for complex hyperplanes of the form
$$L_k\ =\ \left\{(z_n)_{n\in\N}\in H\ |\ z_0\ =\ k\right\}, \ \ \ \mathfrak{Re}\,k>0\,.$$

Hence it is not possible to apply out slicing method to a maximally complex manifold $M\subset b\Omega$, since we have no way to assure even the boundedness of the slice.
\end{ex}

\medskip

If we show that $\widetilde{A}_U=\bigcup_{k\in U}\widetilde{A}_k$ is an analytic space, the thesis will follow. By \cite[Remark 5.4]{M}, $\widetilde{A}_U$ is a continuous family in the parameter $k$, therefore it is a rectifiable set of real dimension $2m+2$. We denote by $[\widetilde{A}_U]$ the current of integration associated to it and we define the map $\kappa:\widetilde{A}_U\to U\subset \C$ such that $x\in \widetilde{A}_{\kappa(x)}$ for every $x\in \widetilde{A}_U$; the map $\kappa$ is Lipschitz-continuous, therefore
$$\int_{\widetilde{A}_U}C(d^{\widetilde{A}_U}\kappa_x)\omega=\int_{U}\int_{\widetilde{A}_k}\omega$$
by the Coarea formula in \cite[Theorem 9.4]{AK}.

The previous formula implies that the current $[\widetilde{A}_U]$ is of bidimension $(m+1,m+1)$, which is equivalent to the fact that $\mathrm{Tan}^{(2m+2)}(\widetilde{A}_U, x)=V_x$ is a complex subspace for $\Hff^{2m+2}-$a.e. $x\in \widetilde{A}_U$; moreover, $\kappa$ is the restriction to $\widetilde{A}_U$ of a $\C$-linear map $f:H\to\C^2$, therefore $d^{\widetilde{A}_U}\kappa_x=df\vert_{V_x}$. By formula (9.2) in \cite{AK} and the properties of $\C-$linear maps, we get
$$C(d^{\widetilde{A}_U}\kappa_x)=C(df\vert_{V_x})>0\;.$$
This implies that $[\widetilde{A}_U]$ is a positive current.

The topological boundary of $\widetilde{A}_U$ is given by the union
$$\bigcup_{k\in U}A_k\cup \bigcup_{k\in bU} \widetilde{A}_k$$
and therefore is again a rectifiable set, this time of dimension $2m+1$. The boundary of the current $[\widetilde{A}_U]$ is concentrated on such a set and is therefore not contained in the bounded open set
$$\Omega_U=\bigcup_{k\in U}(L_k\cap \Omega)\;.$$

Summing up, $[\widetilde{A}_U]$ is a $(2m+2)-$rectifiable current, which is positive and closed in $\Omega_U$; therefore, by \cite[Theorem  4.5]{M}, $[\widetilde{A}_U]$ can be represented by integration on the regular part of an analytic set. We denote such a set by $V$.

\medskip

Let us consider the projection $p:H\to\C^{m+1}$, which is an immersion with self-transverse intersections when restricted to $M$, and let us suppose that, for some open set $\Omega_U$, we can find a linear functional $\nu: \pi(\Omega_U)\to\C$ such that $p(A_k)=\nu^{-1}(k)\cap p(M)$ for every $k\in U$ and such that $p\vert_{A_k}$ is again an immersion with self-transverse intersections.

We can always find such a $\nu$, up to shrinking $U$; we can also restrict $U$ further, so that every connected component $U_j$ of $\pi(\Omega_U)\setminus p(M)$ intersects $\nu^{-1}(k)$ in a non empty set for every $k\in U$. Going through the proof of Theorem 5.6 in \cite{M}, we can construct holomorphic functions
$$F^h_{j,k}:U_j\cap \nu^{-1}(k)\to H'$$
which realize $\widetilde{A}_k$ as their graph. What we proved before is that
$$U_j\ni(z,k)\mapsto F_{j,k}^h(z)=F^h_j(z,k)$$
is a holomorphic function whenever $(z,F^{h}_{j,k}(z))$ belongs to $(\widetilde{A}_k)_\textrm{reg}$; therefore, we have an analytic set $S_{j,k}\subset U_j\cap\nu^{-1}(k)$ outside which the dependence from $k$ is analytic. Let us denote by $S_j=\bigcup_k S_{j,k}$.

By an easy coarea argument, we observe that $\Hff^{2m+1}(S_j)=0$.

Finally, the functions $F_{j}^h$ are bounded on $U_j$ because their images are contained in any ball which contains $\Omega_U\cap M$, which is bounded. Therefore, we can extend the functions $F_j^h$ as holomorphic functions through $S_j$. Obviously, the graph of $F_j^h$ on $U_j$ coincides with the closure of its graph on $U_j\setminus S_j$; therefore, the collection of the graphs of the $F_j's$ (which are finite-dimensional analytic subspaces of $H\setminus M$ by Th\'eor\`eme 2 in the third part of \cite{R}) supports the current $[\widetilde{A}_U]$, which is then a holomorphic chain.

\medskip

The open sets $$\omega_U\ \doteqdot\ \bigcup_{k\in U} L_k$$ are a covering of $M$. Since $M$ is quasi-locally compact, we can find a locally finite countable subcovering $\omega_i$, $i\in\mathbb N$. In each $\omega_i$ lives a holomorphic chain $\tilde{A}_i$. On the intersection of two such open domains $\omega_i$ and $\omega_j$ the two chains coincide by analiticity, since they both coincide near the boundary $b\Omega$ with the manifold $W$ of Corollary~\ref{L4}.

The union $T=\tilde{A}_i$ of the holomorphic chains defined in the open sets $\omega_i$ is the holomorphic chain with boundary $M$.

\medskip

A finite dimensional analytic variety in $H$ is contained in a finite dimensional complex manifold and it is a complex variety in the latter. Therefore, we can repeat almost verbatim the argument used in \cite{DS1} to show that the singularities of $\widetilde{A}_U$ are a discrete set.

\medskip

The chain $T$ we constructed is unique in $\Omega$; this follows from the fact that there is no holomorphic cycle of $H$ of positive dimension contained in $\Omega$. Suppose $X$ is a complex analytic subset of finite dimension in $H$, which is contained in $\Omega$; if we consider the linear functional $\lambda$ defined at the beginning of this section, obviously the set $\{|\lambda|\leq\delta\}$ doesn't intersect $X$ for $\delta>0$ small enough, therefore the function $|\lambda|$ attains a positive minimum on $X$, but then the holomorphic function $\lambda$ has to be constant on $X$. Therefore $X$ lies in $\{\lambda=c\}\cap \Omega$ which is bounded. By \cite{Au}, $X$ has to be of dimension $0$, but this is impossible. {\flushright$\Box$}

\bigskip

We remark that the previous proof works also in the finite-dimensional case, giving a simplification of the argument used in \cite{DS1}, by employing the classical result by King, instead of its Hilbert space analogue.\vspace{0.3cm}

It is also worth noticing that we can to some extent relax the convexity property, asking only for $\Omega$ to be convex, strongly convex at one point and strongly pseudoconvex (or at least the Levi-form of
$b\Omega$ to have at most $m$ vanishing eigenvalues). In fact, strong convexity at one point and convexity everywhere ensure that the slices are compact, if the hyperplanes are parallel to the tangent at the point of strong convexity; we also need the strong pseudoconvexity assumption to guarantee that the Levi form of the boundary has positive eigenvalues, a fact which is implied by strong convexity but it isn't by mere convexity.

\begin{proof}[Proof of Theorem 1.2.] It is sufficient to show that if Conjecture \ref{conj87} holds true, then hypothesis (iv) is always satisfied.

We thus assume that Conjecture \ref{conj87} holds true and we consider the metric space given by $M$ with the distance $d$ given by the restriction of the distance of $H$.  Endowed with such distance, $M$ is a locally compact space; as a manifold, $M$ is second countable, therefore it is $\sigma-$compact.

As the closure of $\{\|x-x_0\|<\rho\}$ in $H$ is $\{\|x-x_0\|\leq \rho\}$ for every $x_0\in H$ and $\rho>0$, the same holds when the open and closed balls are intersected with $M$. Therefore, the metric $d$ is Heine-Borel, i.e. bounded closed sets are compact.

Now, let us take $x\in M$ and $\rho>0$; by the Heine-Borel property, the set
$$\{y\in M\ \vert\ d(x,y)\leq\rho\}=M\cap \{y\in H\ \vert\ \|x-y\|\leq\rho\}$$
is compact, that is, $M$ is quasi-locally compact, therefore we can apply Theorem \ref{main} and obtain the desired result.\end{proof}

\section{Further questions}
It would be nice to get rid of the technical hypotheses (ii), (iii), (iv) or to find examples showing that the extension does not hold without them.

Hypothesis (ii) (or its weaker version explained after the proof of Theorem 1.1) is needed to apply the cut, extend and paste method as we presented it (see example~\ref{parapertissima}), but might not be necessary to extension, as another line of proof might be possible.

Hypothesis (iii) is already present in the compact case treated in~\cite{M}, and already in that case it would be nice to see whether it is a necessary request or not.

It is worth noticing that an example showing extension does not hold just under hypotheses (i), (ii) and (iii) would be an indirect proof of the falseness of Williamson and Janos' conjecture.\vspace{0.3cm}

A possible direction for future research on the subject is that pursued in~\cite{DS2} in $\mathbb C^n$: given a (pseudo)convex domain $\Omega\subset H$, and a subdomain $A\subset b\Omega$ is it possible to find a domain $E\subset \Omega$ depending only on $\Omega$ and $A$ such that every maximally complex manifold (of real dimension at least $5$, satisfying some technical conditions) $M\subset A$ is the intersection of the boundary of a complex variety $W\subset E$ with $A$?

Thanks to what we proved in this paper, if $\Omega$ is a strongly convex domain, and $A=b\Omega$, then $E=\Omega$. Thus the question we are asking is indeed a generalization of the main result of this paper.\vspace{0.3cm}

Another quite natural question is whether it is possible to extend this result (or the one contained in \cite{M}) to Banach spaces.

\vspace{1cm}



\label{finishpage}

\end{document}